\theoremstyle{plain}
\newtheorem{thm}{Theorem}[section]
\newtheorem{lem}[thm]{Lemma}
\theoremstyle{definition}
\newtheorem*{proof1}{Proof of Theorem \ref{main}}
\newcommand*\pFq[6][8]{%
  \begingroup 
  \pFqmuskip=#1mu\relax
  \mathcode`\,=\string"8000
  \begingroup\lccode`\~=`\,
  \lowercase{\endgroup\let~}\pFqcomma
  {}_{#2}F_{#3}{\left(\genfrac..{0pt}{}{#4}{#5};#6\right)}%
  \endgroup
}
\newcommand{\pFqcomma}{\mskip\pFqmuskip}
\begin{document}

\title{A Picard family of curves and hypergeometric functions over finite fields I}
\author{Yoh Takizawa}
\date{}
\maketitle

\begin{abstract}
We give an expression for the trace of Frobenius for the family of curves
\[ y^3 = x (x-1)(x-\lambda)(x-\mu) \]
over finite fields in terms of finite field hypergeometric functions.
\end{abstract}

\section{Introduction}
Let $p$ be an odd prime and $\mathbb{F}_p$ be the finite fields with $p$ elements.
For $\lambda \in \mathbb{F}_q \setminus \{ 0, 1 \}$ we define an elliptic curve over $\mathbb{F}_q$ 
in the Legendre family by 
\[ E : y^2 = x(x-1)(x- \lambda) \]
Koike \cite{Koike} showed that, for all odd primes $p$, 
the trace of Frobenius for curves in this family can be expressed in terms of 
Grreene's finite field hypergeometric function. 
\[ a_p(E) = -p \phi(-1) \pFq{2}{1}{\phi, \phi}{\varepsilon}{\lambda}_p \]
where $\varepsilon$ is the trivial character and $\phi$ is a quadratic character of $\mathbb{F}_p^{\times}$. 

\[\]

Let $q$ be a power of a rational prime $p$ and $\mathbb{F}_q$ be the finite fields with $q$ elements.
For $\lambda, \mu \in \mathbb{F}_q$ with $\lambda \mu (\lambda - 1)(\mu - 1) \neq 0$ 
we define a smooth projective curve over $\mathbb{F}_q$ in the Picard family by 
\[ C : y^3 = x(x-1)(x- \lambda)(x-\mu). \]
This is a genus $3$ curve.

We show that the trace of Frobenius $a_q(C)$ for $C$ can be expressed in terms of finite field Appell hypergeometric functions. 

\begin{thm}\label{main}
Let $q = p^e$ be a power of prime such that $q \equiv 1 \pmod 3$, 
 $\chi_3$ be a cubic character and  $\varepsilon$ be the trivial character of $\mathbb{F}_q$, we have
\[ a_q(C) = -q \sum^2_{J=1} \chi^{2j}_3(-1) \pFq{}{1}{\chi^j_3, \chi^j_3, \chi^j_3}{\varepsilon}{\lambda, \mu}_q. \]
\end{thm}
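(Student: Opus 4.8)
The plan is to compute $\#C(\mathbb{F}_q)$ by point counting and then use $a_q(C) = q + 1 - \#C(\mathbb{F}_q)$. First I would count the affine points. Since $q \equiv 1 \pmod 3$, a cubic character $\chi_3$ exists, and for $a \in \mathbb{F}_q^{\times}$ the number of solutions of $y^3 = a$ equals $\sum_{j=0}^{2} \chi_3^j(a)$, while $y^3 = 0$ has the single solution $y = 0$. Writing $f(x) = x(x-1)(x-\lambda)(x-\mu)$ and using $\chi_3^j(0) = 0$, the affine count collapses to
\[ N_{\mathrm{aff}} = q + \sum_{j=1}^{2} S_j, \qquad S_j := \sum_{x \in \mathbb{F}_q} \chi_3^j\bigl(f(x)\bigr), \]
where the index $j = 0$ supplies the main term $q$ (the sum over non-roots contributing $q-4$ and the four roots of $f$ contributing $4$).

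Next I would handle the points at infinity. Because $\gcd(3, \deg f) = \gcd(3,4) = 1$, the place above $x = \infty$ is totally ramified, so $C$ has exactly one rational point at infinity; concretely the substitution $x = 1/u$, $y = w/u^2$ gives $w^3 = u^2(1-u)(1-\lambda u)(1-\mu u)$, which has a single smooth branch $(u,w) = (0,0)$. Hence $\#C(\mathbb{F}_q) = N_{\mathrm{aff}} + 1 = q + 1 + S_1 + S_2$, so that $a_q(C) = -(S_1 + S_2)$, and it remains to identify each $S_j$ with a finite field Appell function.

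For the key step I would transform $S_j$ by the substitution $x = 1/t$ (the contribution of $x = 0$ vanishes since $\chi_3^j(0) = 0$). Writing $\chi = \chi_3^j$ and using $\chi(1/t) = \overline{\chi}(t)$ together with $\chi\bigl(\tfrac{1}{t} - c\bigr) = \chi(1 - ct)\,\overline{\chi}(t)$ for $c \in \{1,\lambda,\mu\}$, and collecting the four factors via $\overline{\chi}^{\,4} = \overline{\chi}$, one obtains
\[ S_j = \sum_{t \in \mathbb{F}_q} \overline{\chi_3^j}(t)\,\chi_3^j(1-t)\,\chi_3^j(1-\lambda t)\,\chi_3^j(1-\mu t). \]
This is precisely the character-sum (integral-type) representation of the finite field Appell function $F_1$ with all upper parameters a cubic character and lower parameter $\varepsilon$, evaluated at $(\lambda,\mu)$. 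Matching it against the definition of $\pFq{}{1}{\chi_3^j, \chi_3^j, \chi_3^j}{\varepsilon}{\lambda, \mu}_q$ produces the normalizing factor $q$ and the character value $\chi_3^{2j}(-1)$ (which in fact equals $1$, since $\chi_3(-1)$ is simultaneously a square root and a cube root of unity); reindexing $j \mapsto 3-j$ then turns the parameters $\overline{\chi_3^j}$ into $\chi_3^j$ without changing the value of the sum over $j \in \{1,2\}$. Assembling these identities yields the claimed formula.

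The main obstacle I anticipate is not the point count but pinning down the normalization in this last step: relating the single character sum $S_j$ to the Appell function exactly, with the correct power of $q$ and the correct character prefactor, requires the finite field analogue of the Euler-type integral representation of $F_1$, that is, a Gauss- and Jacobi-sum computation translating the defining (double) series of $F_1$ into the one-parameter sum above. Care with the character conventions (inverses versus the characters themselves, and the bookkeeping of the factors $\chi_3^{2j}(-1)$) is where sign and normalization errors are most likely to enter, so I would verify the constant against a specialization (for instance degenerating toward the Legendre/Koike case) before trusting the general identity.
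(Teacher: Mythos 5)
Your proposal is correct and takes essentially the same approach as the paper: point count giving $a_q(C) = -(S_1+S_2)$, then the inversion substitution $x = 1/t$ to match each $S_j$ against Ghosh's character-sum representation of the finite field $F_1$ (the paper's Lemma~\ref{lem:22}, restated after the same substitution $s=t^{-1}$ as Lemma~\ref{lem:23}); you simply apply the substitution to the character sum while the paper applies it inside Ghosh's lemma, which is the identical computation read in the opposite direction. Your added care with the point at infinity and the observation that $\chi_3(-1)=1$ (so all the prefactors $\chi_3^{2j}(-1)$ are trivial, making the $j \mapsto 3-j$ reindexing harmless) go slightly beyond what the paper writes down, but do not change the argument.
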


\section{Finite field hypergeometric functions}

Recall that the hyperheometric series is defined by 
\[ \pFq{2}{1}{a, b}{c}{x} = \sum^{\infty}_{n=0} \dfrac{(a, n)(b, n)}{(1, n)(c, n)} x^n \]
where
\[ (a, n) = \begin{cases}
a(a+1) \cdots (a+n-1), \ \ \ \ \ &n \ge 1\\
1, &n = 0
\end{cases} \]

In \cite{Greene}, Greene defined a finite field analogue of classical hypergeometric series. 
Let $q = p^a$ a power of prime, $\mathbb{F}_q$ be a finite field of $q$ elements.
For a character $\chi \in \widehat{\mathbb{F}_q^{\times}}$, we extend it to all of $\mathbb{F}_q$ by setting
\[ \chi(0) = \begin{cases}
0, \ \ \ \ \ \ &\chi \neq \varepsilon\\
1, &\chi = \varepsilon.
\end{cases} \]
where $\varepsilon$ is the trivial character.

For two characters $A, B$ of $\mathbb{F}_q$ we define the normalized Jacobi sum 
\[{A \choose B} = \dfrac{B(-1)}{q} J(A, \overline{B}) \]
where $J(A, B) = \sum_{t \in \mathbb{F}_q} A(t) B(1-t)$ is the usual Jacobi sum. 
The following properties of normalized Jacobi sum are found in \cite{Greene}.
\begin{lem}\label{lem:21}
Let $A, B$ be characters of $\mathbb{F}_q$ we have
\begin{enumerate}
\item[(1)] 
\[ \overline{A}(1-t) = \delta(t) + \dfrac{q}{q-1} \sum_{\chi \in \widehat{k}} {A \chi \choose \chi} \chi(t) \]
\item[(2)] 
\[ J(A, B) = q B(-1) {A \choose \overline{B}} \]
\item[(3)] 
\[ {A \choose B} = {A \choose A \overline{B}} \]
\end{enumerate}
where
\[ \delta(t) = \begin{cases}
0, \ \ \ \ \ &t \neq 0\\
1, &t = 0
\end{cases} \]
for $t \in \mathbb{F}_q$. 
\end{lem}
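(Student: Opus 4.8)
The plan is to prove the three identities in the order (2), (3), (1), since the first two are purely algebraic consequences of the definition of the normalized Jacobi sum whereas (1) is a finite Fourier expansion that demands the most care. For (2) I would simply unwind the definition: by definition ${A \choose \overline{B}} = \frac{\overline{B}(-1)}{q} J(A, \overline{\overline{B}}) = \frac{\overline{B}(-1)}{q} J(A, B)$, so that $q B(-1){A \choose \overline{B}} = B(-1)\overline{B}(-1) J(A,B)$. Since $B(-1)^2 = B(1) = 1$, the character value $B(-1)$ is real and equal to $\pm 1$, whence $B(-1)\overline{B}(-1) = 1$ and the claimed equality $J(A,B) = qB(-1){A \choose \overline{B}}$ follows at once.

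For (3), expanding both sides with the definition and cancelling the normalizing factors (again using $B(-1)\overline{B}(-1) = 1$) reduces the claim to the Jacobi-sum identity $J(A, \overline{B}) = A(-1) J(A, \overline{A} B)$. I would prove this by the change of variable $t \mapsto t' = t/(t-1)$, an involution of $\mathbb{F}_q \setminus \{1\}$ under which $t = t'/(t'-1)$ and $1 - t = -1/(t'-1)$. Substituting into $J(A, \overline{B}) = \sum_t A(t)\overline{B}(1-t)$ and using multiplicativity produces the variable factor $A(t')(\overline{A} B)(1 - t')$ together with the constant $\overline{B}(-1)(\overline{A} B)(-1) = A(-1)$, after which one recognizes $J(A, \overline{A} B)$. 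The only points needing attention are the terms where an argument vanishes, which are controlled by the convention $\chi(0) = 0$ for $\chi \neq \varepsilon$; the degenerate cases $A = \varepsilon$, $B = \varepsilon$, and $A = B$ are checked directly. Alternatively, when $A$, $\overline{B}$, and $A\overline{B}$ are all nontrivial one may use the Gauss-sum factorization $J(A, \overline{B}) = g(A)g(\overline{B})/g(A\overline{B})$ together with $g(\psi)g(\overline{\psi}) = \psi(-1)q$ to reach the same identity.

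The substantive step is (1), which I would prove by evaluating the right-hand side directly. Inserting the definition ${A\chi \choose \chi} = \frac{\chi(-1)}{q} J(A\chi, \overline{\chi})$ and expanding the Jacobi sum as $\sum_s (A\chi)(s)\overline{\chi}(1-s)$, I would interchange the $\chi$-summation with the $s$-summation. For fixed $s \neq 0,1$ the characters combine as $\chi\!\left(\frac{-ts}{1-s}\right)$, so the orthogonality relation $\sum_{\chi} \chi(x) = (q-1)$ for $x = 1$ and $0$ otherwise (with $x \in \mathbb{F}_q^{\times}$) collapses the $\chi$-sum and forces $s = 1/(1-t)$. For $t \neq 0, 1$ this yields $A\!\left(\frac{1}{1-t}\right) = \overline{A}(1-t)$, matching the left-hand side since $\delta(t)$ vanishes there. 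The hard part, and the main obstacle, is the boundary bookkeeping: the terms $s = 0$ and $s = 1$ in each inner Jacobi sum (which survive only for $\chi = \overline{A}$ and $\chi = \varepsilon$ respectively, by the extension convention), together with the special values $t = 0$ and $t = 1$. At $t = 0$ only $\chi = \varepsilon$ contributes, and one checks ${A \choose \varepsilon} = \frac{1}{q} J(A, \varepsilon) = 0$ for $A \neq \varepsilon$, so the right-hand side reduces to $\delta(0) = 1 = \overline{A}(1)$; at $t = 1$ the generic contribution is empty and the remaining boundary terms cancel to give $0 = \overline{A}(0)$. Assembling these cases completes the proof.
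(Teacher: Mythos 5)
The paper itself gives no proof of this lemma --- it simply cites Greene --- so your proposal has to stand on its own, and unfortunately it fails at exactly the steps you deferred. Your proof of (2) is correct (it is purely definitional), your involution $t \mapsto t/(t-1)$ for the generic case of (3) is the standard and correct computation, and your orthogonality strategy for (1) is the standard route to Greene's identity. The genuine gap is the boundary bookkeeping, which you flag as ``the main obstacle'' and then assert resolves itself: it does not, because it hinges on the extension convention at $0$, and the convention stated in this paper ($\varepsilon(0)=1$) differs from Greene's (who extends \emph{all} characters, including $\varepsilon$, by $\chi(0)=0$). Under the paper's stated convention the ``degenerate cases checked directly'' for (3) actually fail: for $A \neq \varepsilon$ one computes $\binom{A}{\varepsilon} = \frac{1}{q}\sum_{t} A(t) = 0$ while $\binom{A}{A} = \frac{A(-1)}{q}J(A,\overline{A}) = -\frac{1}{q}$, so (3) is false at $B=\varepsilon$ (and symmetrically at $A=B$). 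Likewise in (1), the boundary terms you correctly identify --- the $s=0$ term surviving for $\chi=\overline{A}$ and the $s=1$ term for $\chi=\varepsilon$ --- do \emph{not} cancel: after normalization they contribute $\frac{\overline{A}(-t)+1}{q-1}$ for every $t \neq 0$, so (1) is false for generic $t$ under $\varepsilon(0)=1$. A concrete check: for $q=3$, $A=\phi$ the quadratic character, $t=2$, the left side of (1) is $\phi(-1)=-1$ while the right side is $0$ (both $\binom{\phi}{\varepsilon}$ and $\binom{\varepsilon}{\phi}$ vanish under the paper's convention).

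Under Greene's convention $\varepsilon(0)=0$ all of these boundary terms vanish identically (both $(A\chi)(0)$ and $\overline{\chi}(0)$ are always $0$), your generic computations become the entire proof, and all three identities hold for all $A$, $B$, $t$ --- which is why Greene can state them without exceptional cases. So the repair is not to ``check'' cancellations that do not occur, but to observe that the lemma is a statement in Greene's convention, adopt it, and note that the boundary analysis then trivializes; your sentences ``the remaining boundary terms cancel'' and ``the degenerate cases $A=\varepsilon$, $B=\varepsilon$, $A=B$ are checked directly'' are precisely the claims that break. (This convention clash is an inconsistency already present in the paper, which defines $\varepsilon(0)=1$ and then quotes Greene's identities verbatim; a careful proof such as yours is exactly where it surfaces, and your write-up should say so explicitly rather than absorb it silently.)
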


For three characters $A, B, C \in \widehat{\mathbb{F}_q^{\times}}$ Greene defined the finite field 
hypergeometric function ${}_2F_1$ by
\[ \pFq{2}{1}{A, B}{C}{x}_q = \dfrac{q}{q-1} 
\sum_{\chi \in \widehat{\mathbb{F}_q^{\times}}}{A \chi \choose \chi} {B \chi \choose C \chi} \chi(x)  \]

For four characters $A, B_1, B_2, C \in \widehat{\mathbb{F}_q^{\times}}$ Ghosh defined the finite field 
hypergeometric function of two variables $F_1$ by
\[ \pFq{}{1}{A, B_1, B_2}{C}{x, y}_q = \dfrac{q^2}{(q-1)^2} 
\sum_{\chi_1, \chi_2 \in \widehat{\mathbb{F}_q^{\times}}}{A \chi \choose C \chi} 
{B_1 \chi_1 \choose \chi_1} {B_2 \chi_2 \choose \chi_2} \chi_1(x) \chi_2(y)  \]
where $\chi = \chi_1 \chi_2$. 

This function is a finite field analogue of Appell hypergeometric series $F_1$. 
\[ \pFq{}{1}{a, b_1, b_2}{c}{x, y} = \sum_{m, n \ge 0} \dfrac{(a,m+n)(b_1,m)(b_2,n)}{(1,m)(1,n)(c,m+n)} x^m y^n. \]

The next lemma is proved by Ghosh \cite{Ghosh}. 

\begin{lem}\label{lem:22}iGhoshj
\[ \pFq{}{1}{A, B_1, B_2}{C}{x, y}_q =  \varepsilon(xy) \dfrac{AC(-1)}{q} 
\sum_{t \in \mathbb{F}_q} A(t) \overline{A} C(1-t) \overline{B}_1 (1-xt) \overline{B}_2 (1-yt) \]
where $\chi = \chi_1 \chi_2$. 
\end{lem}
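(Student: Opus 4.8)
\begin{proof1*}[Proposal]
The plan is to obtain the single-sum representation directly from the defining double character sum, by expanding the two factors $\overline{B}_1(1-xt)$ and $\overline{B}_2(1-yt)$ and then performing the sum over $t$. Concretely, I would start from the right-hand side and collapse it back to the definition of $F_1$. Throughout the main computation I assume $x \neq 0$ and $y \neq 0$, so that $\varepsilon(xy) = 1$; the boundary cases $xy = 0$ are controlled by the prefactor $\varepsilon(xy)$ and by the extension convention $\chi(0) = 0$ for $\chi \neq \varepsilon$, and are treated separately.

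First I would apply Lemma \ref{lem:21}(1) to each of the two factors, writing
\[ \overline{B}_i(1 - z t) = \delta(zt) + \frac{q}{q-1} \sum_{\chi_i \in \widehat{\mathbb{F}_q^{\times}}} {B_i \chi_i \choose \chi_i} \chi_i(z)\, \chi_i(t) \]
with $(z,\chi_i) = (x,\chi_1)$ and $(y,\chi_2)$ respectively. Substituting both expansions into the right-hand side and setting the $\delta$-terms aside, I would interchange the (finite) summations so as to isolate the inner sum over $t$. Putting $\chi = \chi_1 \chi_2$, the factors $A(t)$, $\chi_1(t)$, $\chi_2(t)$ combine into $A\chi(t)$, and the inner sum becomes a Jacobi sum,
\[ \sum_{t \in \mathbb{F}_q} A\chi(t)\, \overline{A}C(1-t) = J(A\chi,\, \overline{A}C). \]

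Next I would convert this Jacobi sum into a normalized one using Lemma \ref{lem:21}(2), obtaining $J(A\chi, \overline{A}C) = q\, \overline{A}C(-1) {A\chi \choose A\overline{C}}$, and then apply Lemma \ref{lem:21}(3) in the form ${A\chi \choose C\chi} = {A\chi \choose A\overline{C}}$ to recover exactly the coefficient ${A\chi \choose C\chi}$ appearing in the definition of $F_1$. It remains to collect the scalar factors: the prefactor $\frac{AC(-1)}{q}$, the two factors $\frac{q}{q-1}$ from the expansions, and the factor $q\,\overline{A}C(-1)$ from the Jacobi sum multiply to $\frac{q^2}{(q-1)^2}$, since $A(-1)\overline{A}(-1) = \varepsilon(-1) = 1$ and $C(-1)^2 = 1$. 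What is left is
\[ \frac{q^2}{(q-1)^2} \sum_{\chi_1,\chi_2} {A\chi \choose C\chi} {B_1\chi_1 \choose \chi_1} {B_2\chi_2 \choose \chi_2} \chi_1(x)\chi_2(y), \]
which is precisely $\pFq{}{1}{A, B_1, B_2}{C}{x, y}_q$, completing the generic computation.

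The hard part will be the bookkeeping of the degenerate contributions that I set aside. Expanding both factors produces, besides the principal term, three mixed terms carrying a $\delta(xt)$ or $\delta(yt)$; for $x, y \neq 0$ these localize at $t = 0$, where the outer factor $A(t) = A(0)$ vanishes when $A \neq \varepsilon$ but equals $1$ when $A = \varepsilon$. One must also note that the expansion in Lemma \ref{lem:21}(1) needs care at argument $0$ when its character is trivial, i.e.\ when $B_1$ or $B_2$ equals $\varepsilon$. I would therefore verify case by case that these $t = 0$ discrepancies and the $\chi_1 = \varepsilon$ or $\chi_2 = \varepsilon$ boundary terms of the double sum conspire to cancel, so that the clean, $\delta$-free identity holds on the nose; combined with the cases $x = 0$ or $y = 0$ absorbed by $\varepsilon(xy)$, this degenerate analysis is the delicate step, while the remainder is the straightforward character-sum manipulation above.
\end{proof1*}
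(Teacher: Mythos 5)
Your proposal follows essentially the same route as the paper's own proof: starting from the right-hand side, expanding $\overline{B}_1(1-xt)\,\overline{B}_2(1-yt)$ via Lemma \ref{lem:21}(1), interchanging sums to produce the Jacobi sum $J(A\chi,\overline{A}C)$, and converting it with Lemma \ref{lem:21}(2) and (3) to recover the coefficient ${A\chi \choose C\chi}$ in the definition of $F_1$. Your explicit bookkeeping of the $\delta$-terms and the degenerate cases ($t=0$, trivial characters, $xy=0$) is sound and in fact more careful than the paper, which silently drops these contributions, but it does not change the underlying argument.
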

\begin{proof}
Supoose 
\[ g(x, y) = \varepsilon(xy) \dfrac{AC(-1)}{p} 
\sum_{t \in \mathbb{F}_q} A(t) \overline{A} C(1-t) \overline{B}_1 (1-xt) \overline{B}_2 (1-yt) \]
and we consider the cases in $xy \neq 0$. 
\[ \begin{split}
\overline{B_1}(1-xt) \overline{B_2}(1-yt) &= 
\dfrac{q^2}{(q-1)^2} \Bigl( \sum_{\chi_1 \in \widehat{\mathbb{F}_q^{\times}}} {B_1 \chi_1 \choose \chi_1} \chi_1(xt) \Bigr) 
\Bigl( \sum_{\chi_2 \in \widehat{\mathbb{F}_q^{\times}}} {B_2 \chi_2 \choose \chi_2} \chi_2(yt) \Bigr)\\
&= \dfrac{q^2}{(q-1)^2} \sum_{\chi_1, \chi_2 \in \widehat{\mathbb{F}_q^{\times}}} {B_1 \chi_1 \choose \chi_1}  
{B_2 \chi_2 \choose \chi_2} \chi_1(xt) \chi_2(yt)\\
&= \dfrac{q^2}{(q-1)^2} \sum_{\chi_1, \chi_2 \in \widehat{\mathbb{F}_q^{\times}}} {B_1 \chi_1 \choose \chi_1}  
{B_2 \chi_2 \choose \chi_2} \chi(t) \chi_1(x) \chi_2(y)
\end{split}. \]
By (\ref{lem:21}) we have
\[ \begin{split}
g(x, y) &= \dfrac{q^2 AC(-1)}{(q-1)^2} 
\sum_{t \in \mathbb{F}_q} A(t) \overline{A} C(1-t) \sum_{\chi_1, \chi_2 \in \widehat{k}} {B_1 \chi_1 \choose \chi_1}  
{B_2 \chi_2 \choose \chi_2} \chi(t) \chi_1(x) \chi_2(y)\\
&= \dfrac{q^2 AC(-1)}{(q-1)^2} \sum_{\chi_1, \chi_2 \in \widehat{\mathbb{F}_q^{\times}}} {B_1 \chi_1 \choose \chi_1}  
{B_2 \chi_2 \choose \chi_2} \chi_1(x) \chi_2(y) \sum_{t \in \mathbb{F}_q} A \chi(t) \overline{A} C(1-t)\\
&= \dfrac{q^2 AC(-1)}{(q-1)^2} \sum_{\chi_1, \chi_2 \in \widehat{\mathbb{F}_q^{\times}}} {B_1 \chi_1 \choose \chi_1}  
{B_2 \chi_2 \choose \chi_2} \chi_1(x) \chi_2(y) J(A \chi, \overline{A} C)\\
&= \dfrac{q^2 AC(-1)}{(q-1)^2} \sum_{\chi_1, \chi_2 \in \widehat{\mathbb{F}_q^{\times}}} {B_1 \chi_1 \choose \chi_1}  
{B_2 \chi_2 \choose \chi_2} \chi_1(x) \chi_2(y) q \overline{A} C(-1) {A \chi \choose \overline{A} C}\\
&= \dfrac{q^2}{(q-1)^2} \sum_{\chi_1, \chi_2 \in \widehat{\mathbb{F}_q^{\times}}} {A \chi \choose \overline{A} C} 
{B_1 \chi_1 \choose \chi_1}  {B_2 \chi_2 \choose \chi_2} \chi_1(x) \chi_2(y)
\end{split}. \]
\end{proof}

\section{Picard curves over finite fields}

Let $q = p^a$ a power of prime $p > 3$, $\mathbb{F}_q$ be a finite field of $q$ elements 
and let $C$ be a smooth projective curve of genus $3$ over $\mathbb{F}_q$ with an affine model
\[ y^3 = x^4 + c_3x^3 + c_2x^2 + c_1x + c_0. \]
here it is supposed $f(x) = x^4 + c_3x^3 + c_2x^2 + c_1x + c_0$ has no multiple root. 
It is called a Picard curve. 

For $\lambda, \mu \in \mathbb{F}_q$ with $\lambda \mu (\lambda - 1)(\mu - 1) \neq 0$ 
we define a Picard curve over $\mathbb{F}_q$ by 
\[ C : y^3 = x(x-1)(x- \lambda)(x-\mu) \]
This is the Picard family of curves. 

Let $\sharp C(\mathbb{F}_q)$ be the number of $\mathbb{F}_q$-rational points of $C$ and we let 
\[ a_q(C) = 1 + q - \sharp C(\mathbb{F}_q). \]
 $a_q(C)$ is called the trace of Frobenius for $C$. 

The number of $\mathbb{F}_q$-rational points of $C$ can be expressed in 
terms of characters of $\mathbb{F}_q^{\times}$. 
\[ \sharp C(\mathbb{F}_q) = 1 + \sum_{\chi^3 = \varepsilon} \chi(x(x-1)(x-\lambda)(x-\mu)) \]
Let $\chi_3$ be a cubic character of $\mathbb{F}_q^{\times}$ we have
\[ \begin{split}
\sharp C(\mathbb{F}_q) &= 1 + \sum_{x \in \mathbb{F}_q} \sum^2_{j=0} \chi^j_3(x(x-1)(x-\lambda)(x-\mu))\\
&= 1 + q + \sum_{x \in \mathbb{F}_q} \sum^2_{j=1} \chi^j_3(x(x-1)(x-\lambda)(x-\mu))
\end{split} \]

\section{Proof of main theorem}
\begin{lem}\label{lem:23}
Let $A, B_1, B_2, C$ be characters of $\mathbb{F}_q$ and $x, y \in \mathbb{F}_q$ we have
\[ \pFq{}{1}{A, B_1, B_2}{C}{x, y}_q = \varepsilon(xy) \dfrac{AC(-1)}{p} 
\sum  B_1 B_2 \overline{C}(s) 
\overline{A} C(s-1) \overline{B}_1 (s-x) \overline{B}_2 (s-y) \]
\end{lem}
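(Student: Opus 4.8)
The plan is to derive this formula directly from Ghosh's integral representation in Lemma \ref{lem:22} by the single change of variable $t = 1/s$. Since $t \mapsto t^{-1}$ is a bijection of $\mathbb{F}_q^{\times}$, and since the factor $\varepsilon(xy)$ already restricts attention to $x, y \in \mathbb{F}_q^{\times}$ (both sides vanishing otherwise), I would rewrite the sum of Lemma \ref{lem:22} as a sum over $s \in \mathbb{F}_q^{\times}$ with $t$ replaced by $s^{-1}$. The prefactor $\varepsilon(xy)\,AC(-1)/q$ is untouched by this substitution, so all the work is in transforming the summand.

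The key identities I would use repeatedly are $\chi(s^{-1}) = \overline{\chi}(s)$ together with multiplicativity $\chi(ab) = \chi(a)\chi(b)$. Under $t = 1/s$ each of the four factors in Ghosh's summand splits off a pure power of $s$:
\[
\begin{split}
A(t) &= \overline{A}(s),\\
\overline{A}C(1-t) &= \overline{A}C\!\left(\tfrac{s-1}{s}\right) = \overline{A}C(s-1)\, A\overline{C}(s),\\
\overline{B}_1(1-xt) &= \overline{B}_1\!\left(\tfrac{s-x}{s}\right) = \overline{B}_1(s-x)\, B_1(s),\\
\overline{B}_2(1-yt) &= \overline{B}_2\!\left(\tfrac{s-y}{s}\right) = \overline{B}_2(s-y)\, B_2(s).
\end{split}
\]
Multiplying these together, the four pure powers of $s$ collapse via $\overline{A}\cdot A\overline{C}\cdot B_1\cdot B_2 = B_1 B_2 \overline{C}$, since $\overline{A}A = \varepsilon$. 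This leaves exactly the summand $B_1 B_2\overline{C}(s)\,\overline{A}C(s-1)\,\overline{B}_1(s-x)\,\overline{B}_2(s-y)$ claimed in the statement, and the identity follows.

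The one place that requires care — and the only genuine obstacle — is the bookkeeping of the points excluded by the substitution, namely $t=0$ on the Ghosh side and $s=0$ on the new side. Under the extended convention $\chi(0)=0$ for $\chi\neq\varepsilon$, the $t=0$ term of Ghosh's sum equals $A(0)$ (every other factor being evaluated at $1$), while the $s=0$ term of the target sum is a multiple of $B_1 B_2\overline{C}(0)$; these are nonzero only in the degenerate character cases $A=\varepsilon$ and $C=B_1 B_2$ respectively. I would check that these boundary contributions either match or jointly vanish, so that passing between the sum over $\mathbb{F}_q^{\times}$ and the sum over $\mathbb{F}_q$ is consistent. Once this is settled, the bijective substitution on $\mathbb{F}_q^{\times}$ yields the asserted equality directly.
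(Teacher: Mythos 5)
Your proposal is correct and follows exactly the paper's own route: applying Lemma \ref{lem:22} and substituting $t = s^{-1}$, with the pure powers of $s$ collapsing to $B_1 B_2 \overline{C}(s)$ precisely as you compute. In fact you are more careful than the paper, which silently ignores the $t=0$ and $s=0$ boundary terms that you explicitly address.
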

\begin{proof}
By (\ref{lem:22}) 
\[ \pFq{}{1}{A, B_1, B_2}{C}{x, y}_q =  \varepsilon(xy) \dfrac{AC(-1)}{q}  
\sum_t A(t) \overline{A} C(1-t) \overline{B}_1 (1-xt) \overline{B}_2 (1-yt). \]
Suppose $xy \neq 0$ and set $s = t^{-1}$,   
\[ \begin{split}
\pFq{}{1}{A, B_1, B_2}{C}{x, y}_q &= \dfrac{AC(-1)}{q}
\sum A(t) \overline{A} C(1-t) \overline{B}_1 (1-xt) \overline{B}_2 (1-yt)\\
&= \dfrac{AC(-1)}{q} 
\sum A \Bigl( \dfrac{1}{s} \Bigr) \overline{A} C \Bigl( \dfrac{s-1}{s} \Bigr) 
\overline{B}_1  \Bigl( \dfrac{s-x}{s} \Bigr) \overline{B}_2  \Bigl( \dfrac{s-y}{s} \Bigr)\\
&= \dfrac{AC(-1)}{q}
\sum B_1 B_2 \overline{C}(s) \overline{A} C(s-1) \overline{B}_1 (s-x) \overline{B}_2 (s-y)
\end{split} \]
\end{proof}


\begin{proof1}
The trace of Frobenius for $C$ is given by
\[ \begin{split}
a_q(C) &= 1 + q - \sharp C(\mathbb{F}_q)\\
&= - \sum_{x \in \mathbb{F}_q} \chi_3(x(x-1)(x-\lambda)(x-\mu))\\
&\ \ \ \ \ \ \ \  - \sum_{x \in \mathbb{F}_q} \chi_3^2(x(x-1)(x-\lambda)(x-\mu)).
\end{split} \]

Let $A = B_1 = B_2 = \chi_3$, $C = \varepsilon$ the hypergeometric function has the form 
\[ \begin{split}
\pFq{}{1}{\chi_3, \chi_3, \chi_3}{\varepsilon}{x, y}_q &= \dfrac{\chi_3(-1)}{q} 
\sum \chi_3^2(s) \overline{\chi_3} (s-1) \overline{\chi_3} (s-x) \overline{\chi_3} (s-y)\\
&= \dfrac{\chi_3(-1)}{q} 
\sum \overline{\chi_3}(s(s-1)(s-x)(s-y)).  
\end{split} \]
We get
\[ \sum_{t \in \mathbb{F}_q} \chi^2_3(t(t-1)(t-x)(t-y)) = q \chi^2_3(-1) 
\pFq{}{1}{\chi_3, \chi_3, \chi_3}{\varepsilon}{x, y}_q.  \]

Similarly, let $A = B_1 = B_2 = \chi_3^2$, $C = \varepsilon$ 
\[ \sum_{t \in \mathbb{F}_q} \chi^2_3(t(t-1)(t-x)(t-y)) = q \chi^2_3(-1) 
\pFq{}{1}{\chi_3, \chi_3, \chi_3}{\varepsilon}{x, y}_q.  \]
\qed
\end{proof1}

\end{document}